\newtheorem{assumption}{Assumption}
\newtheorem{remark}{Remark}
\newtheorem{lemma}{Lemma}
\newtheorem{theorem}{Theorem}
\let \mr=\mathrm
\let \b=\boldsymbol
\begin{document}

\begin{frontmatter}

\title{Analysis of the SDFEM in a modified streamline diffusion norm for singularly perturbed convection diffusion problems\tnoteref{label3}}
\author[label2]{Jin Zhang}
\author[label1]{Xiaowei Liu}
%\ead{xwliuvivi@hotmail.com}

%\ead{jinzhangalex@hotmail.com}
\address[label2]{School of Mathematical Sciences, Shandong Normal University,
Jinan 250014, China}
\address[label1]{College of Science, Qilu University of Technology, Jinan 250353, China}

\tnotetext[label3]{This research was partly supported by a project of Shandong province higher educational science and technology program (J16LI10)}

\begin{abstract}
In this paper we analyze a streamline diffusion finite element method (SDFEM)
for   a model singularly perturbed convection diffusion problem. To put insight into the influences of the stabilization parameter  on SDFEM solutions, we discuss how to obtain the uniform estimates of errors in the streamline diffusion norm.
% which have not been analyzed up to now. 
 By decreasing the standard stabilization parameters properly near the exponential layers, we obtain the uniform estimates in a norm, which is stronger than the $\varepsilon-$energy norm and weaker than the standard streamline diffusion norm.  

% {\color{red}Numerical experiments show that the presented stabilization parameter provides enough stability
%and yield the  solutions as accurate as the standard ones.}
%and yield same accurate solutions as the standard ones. 
\end{abstract}

\begin{keyword}
convection-diffusion problem\sep SDFEM \sep streamline diffusion norm
\end{keyword}

\end{frontmatter}

%\title{Analysis of the SDFEM in streamline diffusion norm for problems with exponential layers}
%%\tnoteref{label0}}
%%\tnotetext[label0]{The project is supported by
%%NSF of China(10971164).}
%
%
%
%\author[label1] {Xiaowei Liu\fnref {cor2}}
%\author[label2] {Jin Zhang\corref{cor1}}
%\cortext[cor1] {Corresponding author:   jinzhangalex@hotmail.com }
%\fntext[cor2] {Email: xwliuvivi@hotmail.com }
%%\ead{xwliuvivi@hotmail.com}
%\address[label1]{Department of Mathematics, Tongji University, Shanghai 200092, China}
%\address[label2]{School of Mathematical Sciences, Shandong Normal University,
%Jinan 710049, China}

%%%%%%%%%%%%%%%%%%%%%%%%%%%%%%%%%%%%%%%%%%%%%%
%
%
%%%%%%%%%%%%%%%%%%%%%%%%%%%%%%%%%%%%%%%%%%%%%%
\section{Introduction}
Consider the following singularly perturbed boundary value problem:
 \begin{equation}\label{eq:model problem}
 \begin{array}{rcl}
-\varepsilon\Delta u+\boldsymbol{b}\cdot \nabla u+cu=f  & \mbox{in}& \Omega=(0,1)^{2},\\
 u=0 & \mbox{on}& \partial\Omega,
 \end{array}
 \end{equation}
where $\varepsilon\ll |\b{b}|$ is a positive parameter, $\boldsymbol{b}(x,y)=(b_{1}(x,y),b_{2}(x,y))^{T}$,  and $c(x,y)$ and $f(x,y)$  are supposed sufficiently smooth. Also we assume that 
$$
b_{1}(x,y)\ge \beta_{1}>0,\,b_{2}(x,y)\ge \beta_{2}>0,c(x,y)\ge 0\;\text{ on $\bar{\Omega}$, }
$$
and
$$
c(x,y)-\frac{1}{2}\nabla\cdot \b{b}(x,y)\ge \mu_{0}>0\quad \text{on $\bar{\Omega}$}, 
$$
where $\beta_{1}$, $\beta_{2}$, and  $\mu_{0}$ are some constants. These assumptions ensure that  problem \eqref{eq:model problem} has a unique solution
in $H^1_0(\Omega)\cap H^2(\Omega)$ for all $f\in L^2(\Omega)$  (see, e.g., \cite{Styn1Tobi2:2003-SDFEM,Zhang:2003-Finite}). In general there exist
two exponential layers of width $O(\varepsilon\ln(1/\varepsilon))$ at the sides
$x=1$ and $y=1$ for the solution to problem \eqref{eq:model problem}.

%\par
%For problem \eqref{eq:model problem}, stabilized methods and/or a priori adapted meshes (see \cite{Stynes:2005-Steady, Roo1Sty2Tob3:2008-Robust}) are widely used in order to get discrete solutions with satisfactory
%stability and accuracy. The Shishkin mesh, one of a priori adapted meshes, was first proposed by Shishkin \cite{Shishkin:1990-Grid} for finite difference schemes  applied to problems with boundary layers. The streamline diffusion finite element method (SDFEM), one of the most famous stabilized finite element methods,  proposed by Hughes and Brooks \cite{Hugh1Broo2:1979-multidimensional} which is known to provide good stability properties and high accuracy.
\par
%The convection-diffusion problem could be considered as a linearized version
%of the Navier-Stokes equations. 
For the convection-diffusion problem, we can obtain  discrete solutions with satisfactory stability and accuracy by means of stabilized methods and a priori adapted meshes (see \cite{Stynes:2005-Steady,Roo1Sty2Tob3:2008-Robust}), 
for example, the streamline diffusion finite element method (SDFEM)  \cite{Hugh1Broo2:1979-multidimensional} and a Shishkin mesh \cite{Shishkin:1990-Grid}.  
%It has been widely studied of SDFEM on the Shishkin rectangular mesh,
For the SDFEM on Shishkin rectangular meshes,  convergence properties have been widely studied and the reader is referred to \cite{Styn1ORior2:1997-uniformly,Styn1Tobi2:2003-SDFEM,Fra1Lin2Roo3:2008-Superconvergence,Liu1Zhan2:2015-Analysis,Zhan1Liu2:2016-Convergence-CL} and references therein. 

It is easy to obtain  uniform bounds  of $u-u^N$ in the $\varepsilon-$energy norm defined in \eqref{eq: energy norm} (see \cite{Styn1ORior2:1997-uniformly,Styn1Tobi2:2003-SDFEM}), where $u$ is the solution to problem \eqref{eq:model problem} and $u^{N}$ is the SDFEM solution.   Compared with the $\varepsilon-$energy norm, the streamline diffusion norm $\Vert \cdot \Vert_{SD}$ defined in \eqref{eq: SD norm}  is more proper to measure the energy properties of the SDFEM solution, which is derived from the bilinear form of the SDFEM.  Nevertheless, it is impossible to obtain a uniform bound of $\Vert u-u^{I} \Vert_{SD}$ where $u^{I}$ is the interpolant of the solution $u$ from the finite element space of piecewise bilinears, as can be seen by a simple one-dimensional example. The reason lies in the estimates of the term  $\sum \Vert \delta^{1/2}\nabla (u-u^{I}) \Vert$ in $\Vert u- u^{I}\Vert_{SD}$: there is always a factor $\varepsilon^{-1}$,  which can not be balanced by the stabilization parameter $\delta=O(N^{-1})$ ($N$ is the mesh parameter).

In this paper, by decreasing the stabilization parameter near the exponential layers, we obtain uniform bounds of $u-u^N$ in a modified streamline diffusion norm 
 which is stronger than $\varepsilon-$energy norm but weaker than the standard streamline diffusion norm. With this modification of stabilization parameters, numerical stability of the SDFEM is preserved, as can be observed from  numerical tests. 
Besides, we obtain the following uniform local estimates: 
\begin{equation*}
\Vert u-u^{N} \Vert_{SD;\,\Omega_{s}}
\le CN^{-3/2},\quad \Vert u-u^{N}\Vert_{\varepsilon;\,\Omega_{s}} \le C (\varepsilon^{1/2}N^{-1}+N^{-2}\ln^{2}N),
\end{equation*}
where $\Omega_s$ can be seen in Figure \ref{fig:Shishkin mesh I}.

 Here is the outline of this article. In \S 2 we give some a priori information for the solution of \eqref{eq:model problem}, then introduce the Shishkin meshes, a streamline diffusion finite element method on these meshes and our new stabilization parameters. In \S 3 we obtain the global and local estimates.  Finally, some numerical results are presented in \S 4.

Throughout this paper, $C$ will denote a generic positive constant, not necessarily the same at each occurrence,
which is independent of $\varepsilon$
and of the mesh parameter $N$.

%%%%%%%%%%%%%%%%%%%%%%%%%%%%%%%%%%%%%%%%%%%%%%%%%%%%%%%%%%%%%%%%%%%%%%%%%%%%%%%%%%%%%%%%%%%%%%%%%%%%%%%%%%%%%%%
%
%
%%%%%%%%%%%%%%%%%%%%%%%%%%%%%%%%%%%%%%%%%%%%%%%%%%%%%%%%%%%%%%%%%%%%%%%%%%%%%%%%%%%%%%%%%%%%%%%%%%%%%%%%%%%%%%%
\section{The SDFEM on Shishkin meshes}
For the convenience of reading we will present some basic knowledge in this section including the Shishkin meshes, the  SDFEM and some assumptions.

\subsection{Shishkin meshes}\label{subsection-Shishkin meshes}
We use the \textit{Shishkin} meshes to discretize \eqref{eq:model problem}, that is, there are both $N$ (a positive even integer) mesh intervals in $x-$ and $y-$direction which amass in the layer regions.
For this purpose we assume that $\varepsilon\le N^{-1}$ and define the parameters
\begin{equation*}
\lambda_{x}:=\rho\frac{\varepsilon}{\beta_{1}}\ln N,   \quad
\lambda_{y}:=
\rho\frac{\varepsilon}{\beta_{2}}\ln N
\end{equation*}
where $\rho=2.5$.

\begin{figure}
\centering
\includegraphics[width=3.5in]{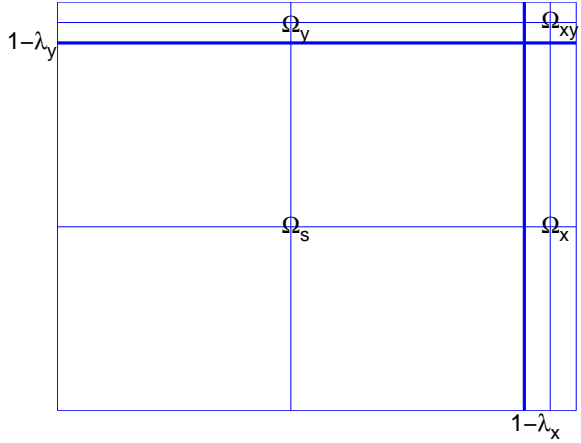}
\caption{Shishkin meshes}
\label{fig:Shishkin mesh I}
\end{figure}

The domain $\Omega$ is separated into four parts as $\bar{\Omega}=\Omega_{s}\cup\Omega_{x}\cup\Omega_{y}\cup\Omega_{xy}$ (see Figure \ref{fig:Shishkin mesh I}), where
\begin{align*}
&\Omega_{s}:=\left[0,1-\lambda_{x}\right]\times\left[0,1-\lambda_{y}\right],&&
\Omega_{x}:=\left[ 1-\lambda_{x},1 \right]\times\left[0,1-\lambda_{y}\right],\\
&\Omega_{y}:=\left[0,1-\lambda_{x}\right]\times\left[1-\lambda_{y},1 \right],&&
\Omega_{xy}:=\left[ 1-\lambda_{x},1 \right]\times\left[1-\lambda_{y},1 \right].
\end{align*}
Define
\begin{numcases}{x_{i}=}
2i(1-\lambda_{x})/N ,&\text{for $i=0,\,\cdots,\,N/2$}, \nonumber\\
1-2(N-i)\lambda_{x}/N, &\text{for $i=N/2+1,\,\cdots,\,N$},\nonumber
\end{numcases}
and
\begin{numcases}{y_{j}=}
2j(1-\lambda_{y})/N ,&\text{for $j=0,\,\cdots,\,N/2$}, \nonumber\\
1-2(N-j)\lambda_{y}/N, &\text{for $j=N/2+1,\,\cdots,\,N$}.\nonumber
\end{numcases}

Draw lines through these mesh points parallel to the $x$-axis and $y$-axis, and the domain $\Omega$ is dissected into rectangles. This yields a triangulation of $\Omega$  denoted by $\mathcal{T}_{N}$ (see  Figure \ref{fig:Shishkin mesh I}).
If $K=K_{ij}:=[x_{i},x_{i+1}]\times[y_{j},y_{j+1}]$, the mesh sizes $h_{x,K}:=x_{i+1}-x_{i}$ and $h_{y,K}:=y_{j+1}-y_{j}$ satisfy
\begin{numcases}{h_{x,K}=}
H_{x}:=\frac{1-\lambda_{x}}{N/2}, &\text{for $i=0,\,\cdots,\,N/2-1$}, \nonumber\\
h_{x}:=\frac{\lambda_{x}}{N/2}, &\text{for $i=N/2,\,\cdots,\,N-1$}, \nonumber
\end{numcases}
and
\begin{numcases}{h_{y,K}=}
H_{y}:=\frac{1-\lambda_{y}}{N/2}, &\text{for $j=0,\,\cdots,\,N/2-1$},\nonumber \\
h_{y}:=\frac{\lambda_{y}}{N/2}, &\text{for $j=N/2,\,\cdots,\,N-1$}.\nonumber
\end{numcases}

%%%%%%%%%%%%%%%%%%%%%%%

\subsection{The streamline diffusion finite element method}
For any subdomain $D$ of $\Omega$, denote the standard (semi-)norms in $H^{1}(D)$ and  $L^{2}(D)$ by $\vert \cdot \vert_{1,D}$ and  $\Vert\cdot\Vert_{D}$ respectively. If $D=\Omega$ then we remove $\Omega$ from the notation.

Let $V:=H^{1}_{0}(\Omega)$. A variational formulation of problem \eqref{eq:model problem} reads as
\begin{equation}\label{weak formulation}
\left\{
\begin{array}{lr}
\text{Find $u\in V$ such that for all $v\in V$}\\
\varepsilon (\nabla u,\nabla v)+(\boldsymbol{b}\cdot\nabla u+cu,v)=(f,v).
\end{array}
\right.
\end{equation}
Obviously there is a unique solution of the weak formulation \eqref{weak formulation} by Lax-Milgram Lemma.
\par
On the Shishkin meshes in the above subsection, we define a $C^0$ bilinear finite element space as follows:
\begin{equation*}
V^{N}:=\{v^{N}\in C(\bar{\Omega}):v^{N}|_{\partial\Omega}=0
 \text{ and $v^{N}|_{K}\in Q_{1}(K)$, }
  \forall K\in \mathcal{T}_{N} \}.
\end{equation*}

\par
The standard Galerkin discretisation of \eqref{weak formulation} reads:
\begin{equation*}
\left\{
\begin{array}{lr}
\text{Find $u^{N}\in V^{N}$ such that for all $v^{N}\in V^{N}$}\\
a_{Gal}(u^{N},v^{N})=(f,v^{N}),
\end{array}
\right.
\end{equation*}
where
$$a_{Gal}(u^{N},v^{N})=\varepsilon (\nabla u^{N},\nabla v^{N})+(\boldsymbol{b}\cdot\nabla u^{N}+cu^{N},v^{N}).$$
An energy norm associated with $a_{Gal}(\cdot,\cdot)$ reads:
\begin{equation}\label{eq: energy norm}
\Vert v^{N} \Vert^{2}_{\varepsilon}:=
\varepsilon \vert v^{N} \vert^{2}_{1}+ \mu_{0}\Vert v^{N} \Vert^{2}.
\end{equation}

The SDFEM adds a stabilization term in a consistent way to the standard Galerkin discretisation and it reads:
\begin{equation*}
\left\{
\begin{array}{lr}\label{SDFEM}
\text{Find $u^{N}\in V^{N}$ such that for all $v^{N}\in V^{N}$},\\
 a_{SD}(u^{N},v^{N})=(f,v^{N})+\underset{K\subset\Omega}\sum(f,\delta\boldsymbol{b}\cdot\nabla v^{N})_{K},
\end{array}
\right.
\end{equation*}
where
\begin{equation*}
a_{SD}(u^{N},v^{N})=a_{Gal}(u^{N},v^{N})+a_{stab}(u^{N},v^{N})
\end{equation*}
and
\begin{equation*}
a_{stab}(u^{N},v^{N})=\sum_{K\subset\Omega}(-\varepsilon\Delta u^{N}+\boldsymbol{b}\cdot\nabla u^{N}+cu^{N},\delta\boldsymbol{b}\cdot\nabla v^{N})_{K}.
\end{equation*}
Note that $\Delta u^N=0$ in $K$ for $u^N\vert_K\in Q_1(K)$. Here $\delta=\delta(x,y)$ is a
user-chosen parameter which will be defined later.
We define the streamline diffusion norm (SD norm)  associated with $a_{SD}(\cdot,\cdot)$:
\begin{equation}\label{eq: SD norm}
\Vert v^{N} \Vert^{2}_{SD}:=
\varepsilon \vert v^{N} \vert^{2}_{1}+\mu_{0}\Vert v^{N} \Vert^{2}
+\sum_{K\subset\Omega} \Vert \delta^{1/2}
\b{b}\cdot\nabla v^{N}\Vert^{2}_{K}.
\end{equation}

Set
$$
x_t:=1-\lambda_x, \ x_s:=x_t-H_x, \
y_t:=1-\lambda_y, \ y_s:=y_t-H_y
$$
and
$$ 
\Omega_{s,\varepsilon}=(0,x_s)\times (0,y_s), \ \Omega^{c}_{s,\varepsilon}=\Omega_{s}\setminus \Omega_{s,\varepsilon}.$$
Note that $\mr{meas}\,(\Omega^{c}_{s,\varepsilon})\le CN^{-1}$. 
For uniform estimates in the SD norm, we set
$$
\xi(x)=
\left\{
\begin{matrix}
1&x\in [0,x_s]\\
\frac{ x_t-x } { H_x }&x\in [x_s,x_t],
\end{matrix}
\right.
\quad
\eta(y)=
\left\{
\begin{matrix}
1&y\in [0,y_s]\\
\frac{ y_t-y } { H_y }&y\in [y_s,y_t]
\end{matrix}
\right.
$$
and 
\begin{equation}\label{eq: delta-K-SD-I}
\delta(x,y):=
\left\{
\begin{split}
&C^{\ast}N^{-1}\xi(x)\eta(y)\quad&&\text{if $(x,y)\in\Omega_s$},\\
&0\quad&&\text{if $(x,y)\in\Omega\setminus\Omega_s$},
\end{split}
\right.
\end{equation}
where $C^{\ast}$ is a properly defined positive constant (see \cite[Lemma 3.25]{Roo1Sty2Tob3:2008-Robust}).

\begin{remark}
The  definition \eqref{eq: delta-K-SD-I}  is different from the usual one (see  \cite{Roo1Sty2Tob3:2008-Robust})
\begin{equation}\label{eq: delta-K-usual-I}
\delta(x,y):=
\left\{
\begin{array}{ll}
C^{\ast}N^{-1},&\text{if $(x,y)\in\Omega_{s}$},\\
0,&\text{otherwise}.
\end{array}
\right.
\end{equation}
In fact they are different from each other only in $\Omega^c_{s,\varepsilon}$. The uniform interpolation estimates in Lemma \ref{lem:u-uI in SD norm} depend on the  definition in $\Omega^c_{s,\varepsilon}$ of \eqref{eq: delta-K-SD-I} (see  \eqref{eq:II4} and \eqref{eq:integral-I}--\eqref{eq:integral-III}). 

Note the SD norm \eqref{eq: SD norm} is stronger than the $\varepsilon$-energy norm \eqref{eq: energy norm}. Clearly,  the SD norm with \eqref{eq: delta-K-SD-I} is a little weaker than one with \eqref{eq: delta-K-usual-I}.
\end{remark}

For any subdomain $D$ of $\Omega$,  notations $\Vert\cdot\Vert_{\varepsilon;\,D}$ and $\Vert \cdot \Vert_{SD;\, D}$  mean that the
integrations in \eqref{eq: energy norm} and \eqref{eq: SD norm} are restricted in $D$.

%%%%%%%%%%%%%%

\subsection{The regularity results and interpolation bounds}
In this paper we always assume that the solution of \eqref{eq:model problem} consists of a regular solution component and various layer parts as follows.
\begin{assumption}\label{assumption-regularity} 
For our analysis we shall assume that the solution of \eqref{eq:model problem} can be decomposed as
\begin{equation}\label{eq:(2.1a)}
u=S+E_{1}+E_{2}+E_{12}.
\end{equation}
For all $(x,y)\in\bar{\Omega}$, the regular part $S$ and the layer terms $E_1$, $E_2$ and $E_{12}$ satisfy
\begin{equation}\label{eq: bounds-I}
\begin{split}
&\left| \partial^{i+j}_{x,y}S  \right|\le C \quad \text{for $0\le i+j \le 3$},\\
&\left| \partial^{i+j}_{x,y}  E_1 \right|\le C\varepsilon^{-i}e^{-\beta_{1}(1-x)/\varepsilon} \quad \text{for $0\le i,j \le 2$},\\
&\left|\partial^{i+j}_{x,y}  E_2 \right|\le C\varepsilon^{-j}e^{-\beta_{2}(1-y)/\varepsilon} \quad \text{for $0\le i,j \le 2$},\\
&\left|\partial^{i+j}_{x,y}  E_{12}  \right|\le C\varepsilon^{-(i+j)}e^{-(\beta_{1}(1-x)+\beta_{2}(1-y))/\varepsilon}\quad \text{for $0\le i,j \le 2$},
\end{split}
\end{equation}
where  $\partial^{i+j}_{x,y}v:=\dfrac{\partial^{i+j}v}{\partial x^{i}\partial y^{j}}$.
Furthermore,  assume that $S\in H^3(\Omega)$ with 
$$\Vert S \Vert_{H^3(\Omega)} \le C.$$
\end{assumption}
\begin{remark}
The conditions that ensure the above assumption valid can be found in \cite[Theorem 5.1]{Linb1Styn2:2001-Asymptotic} and \cite{Kell1Styn2:2005-Corner,Kell1Styn2:2007-Sharpened}.
\end{remark}

The following bounds will be frequently used.
\begin{lemma}\label{lem: bilinear interpolation}
Let $u^{I}$ and $E^{I}$ denote the piecewise bilinear
interpolation of $u$ and $E$ on the Shishkin mesh $\mathcal{T}_{N}$ respectively, where $E=E_{1}+E_{2}+E_{12}$. Suppose that $u$ satisfies Assumption \ref{assumption-regularity}. Then we have
\begin{align*}
&\Vert u-u^{I} \Vert_{L^{\infty}(K)}\le
\left\{
\begin{array}{ll}
C\max\{N^{-2},N^{-\rho}\} &\text{if $K \subset\Omega_{s}$},\\
CN^{-2}\ln^{2}N  &\text{otherwise},
\end{array}
\right.\\
&\Vert \nabla (u-u^{I}) \Vert_{L^{1}(\Omega_{s})}\le CN^{-1}, \quad
\Vert \nabla E^{I} \Vert_{L^{1}(\Omega_{s})}\le CN^{-\rho}.
\end{align*}
\end{lemma}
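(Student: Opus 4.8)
The plan is to exploit the linearity of the bilinear interpolation operator together with the decomposition $u=S+E$, $E=E_1+E_2+E_{12}$ from Assumption~\ref{assumption-regularity}, and to treat the smooth part $S$ and the layer part $E$ by separate arguments. For the smooth part I would use the classical anisotropic $Q_1$ interpolation estimates; writing $u^I$ as the tensor product $\Pi_x\Pi_y u$ of one-dimensional linear interpolations gives, on each $K$,
\begin{equation*}
\Vert S-S^I\Vert_{L^\infty(K)}\le C\bigl(h_{x,K}^2\Vert\partial_{xx}S\Vert_{L^\infty(K)}+h_{y,K}^2\Vert\partial_{yy}S\Vert_{L^\infty(K)}\bigr),
\end{equation*}
and since $|\partial^{i+j}S|\le C$ and $h_{x,K},h_{y,K}\le CN^{-1}$ everywhere, this is $O(N^{-2})$ on every element. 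The companion gradient estimate $\Vert\partial_x(S-S^I)\Vert_{L^\infty(K)}\le C(h_{x,K}\Vert\partial_{xx}S\Vert_{L^\infty(K)}+h_{y,K}^2\Vert\partial_{xyy}S\Vert_{L^\infty(K)})$, which uses the $H^3$-regularity of $S$, then yields $\Vert\nabla(S-S^I)\Vert_{L^1(\Omega_s)}\le CN^{-1}$ after multiplying by $|\Omega_s|\le 1$.

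For the $L^\infty$ bound of the layer part I would distinguish the coarse region $\Omega_s$ from the layer regions. On $\Omega_s$ the crucial observation from \eqref{eq: bounds-I} is that the layer functions themselves are exponentially small: since $x\le 1-\lambda_x$ forces $e^{-\beta_1(1-x)/\varepsilon}\le e^{-\beta_1\lambda_x/\varepsilon}=N^{-\rho}$ (and analogously for $E_2$, $E_{12}$), one has $\Vert E\Vert_{L^\infty(K)}\le CN^{-\rho}$; using the $L^\infty$-stability of $Q_1$ interpolation, $\Vert E-E^I\Vert_{L^\infty(K)}\le 2\Vert E\Vert_{L^\infty(K)}\le CN^{-\rho}$, which combined with the smooth part gives the $\max\{N^{-2},N^{-\rho}\}$ bound. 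On the layer regions $\Omega_x,\Omega_y,\Omega_{xy}$ I would instead keep the second-derivative interpolation estimate, where the key cancellation is $h_x^2\varepsilon^{-2}=O(N^{-2}\ln^2 N)$, because the fine mesh size satisfies $h_x=2\rho\varepsilon(\beta_1 N)^{-1}\ln N$ (and analogously $h_y$); the exponential factors are bounded by $1$, so these elements contribute $O(N^{-2}\ln^2 N)$.

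The two remaining $L^1$ estimates both rest on the one-dimensional integral bound
\begin{equation*}
\int_0^{1-\lambda_x}\varepsilon^{-1}e^{-\beta_1(1-x)/\varepsilon}\,dx\le\frac{1}{\beta_1}e^{-\beta_1\lambda_x/\varepsilon}=\frac{1}{\beta_1}N^{-\rho},
\end{equation*}
which immediately gives $\Vert\nabla E\Vert_{L^1(\Omega_s)}\le CN^{-\rho}$. For $\Vert\nabla E^I\Vert_{L^1(\Omega_s)}$ I would \emph{not} bound $\partial_x E^I$ by the crude pointwise estimate $h_x^{-1}\Vert E\Vert_{L^\infty}$, which loses a full power of $N$; instead I would write the $x$-derivative of the bilinear interpolant on $K=K_{ij}$ as the divided difference $\partial_x E^I=h_{x,K}^{-1}\sum_{l=0,1}\bigl(E(x_{i+1},y_{j+l})-E(x_i,y_{j+l})\bigr)\psi_l(y)$ (with $\psi_0,\psi_1$ the linear nodal basis in $y$) and recognize each difference as $\int_{x_i}^{x_{i+1}}\partial_x E(\cdot,y_{j+l})\,dx$. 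Integrating over $K$ and summing first over the $x$-columns (telescoping the integrals into $\int_0^{1-\lambda_x}|\partial_x E|\,dx$) and then over the $y$-rows (using $\sum_j h_{y,j}\le 1$) produces $\Vert\nabla E^I\Vert_{L^1(\Omega_s)}\le CN^{-\rho}$, the third asserted bound. Finally the second bound follows from $\Vert\nabla(u-u^I)\Vert_{L^1(\Omega_s)}\le\Vert\nabla(S-S^I)\Vert_{L^1(\Omega_s)}+\Vert\nabla E\Vert_{L^1(\Omega_s)}+\Vert\nabla E^I\Vert_{L^1(\Omega_s)}\le CN^{-1}+CN^{-\rho}\le CN^{-1}$, since $\rho=2.5>1$. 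The main obstacle is precisely this $\nabla E^I$ estimate: the naive argument is off by a factor $N$, and the averaged-derivative representation of the interpolant's gradient is what restores the sharp $N^{-\rho}$ decay.
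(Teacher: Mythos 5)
The paper does not actually prove this lemma: its ``proof'' consists of two citations, to \cite[Theorem 4.2]{Styn1ORior2:1997-uniformly} for the $L^{\infty}$ bound and to \cite[Lemma 3.2]{Styn1Tobi2:2003-SDFEM} for the two $L^{1}$ bounds. Your proposal reconstructs those arguments from scratch, and the reconstruction follows the same standard route as the cited sources: the splitting $u=S+E$ from Assumption \ref{assumption-regularity}, anisotropic $Q_1$ interpolation estimates for $S$ (the third-derivative term $h_{y,K}^{2}\Vert\partial_{xyy}S\Vert$ being covered by \eqref{eq: bounds-I}), $L^{\infty}$-stability of bilinear interpolation combined with the smallness $e^{-\beta_1\lambda_x/\varepsilon}=N^{-\rho}$ of the layer terms on $\Omega_s$, and the elementary bound $\int_0^{1-\lambda_x}\varepsilon^{-1}e^{-\beta_1(1-x)/\varepsilon}\,dx\le \beta_1^{-1}N^{-\rho}$. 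You also correctly isolate the one genuinely delicate point, the estimate of $\Vert\nabla E^{I}\Vert_{L^{1}(\Omega_s)}$: the naive bound $|\partial_x E^I|\le Ch_{x,K}^{-1}\Vert E\Vert_{L^\infty}$ only yields $CN^{1-\rho}$, and your divided-difference representation $\partial_x E^I=h_{x,K}^{-1}\sum_{l}\bigl(E(x_{i+1},y_{j+l})-E(x_i,y_{j+l})\bigr)\psi_l(y)$ with each difference rewritten as $\int_{x_i}^{x_{i+1}}\partial_x E(\cdot,y_{j+l})\,dx$, followed by telescoping in $x$ and $\sum_j h_{y,j}\le 1$ in $y$, is exactly the mechanism that restores the sharp $N^{-\rho}$ rate; here the row integrals are taken along node lines $y_{j+l}\le 1-\lambda_y$, so the $E_2$ and $E_{12}$ contributions are also $O(N^{-\rho})$, and the argument goes through.

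One step in your sketch is stated too broadly and, taken literally, fails. On the layer regions you say you would ``keep the second-derivative interpolation estimate'' for the layer part, with the exponential factors bounded by $1$. That works only for a layer term whose layer direction is finely meshed in the region at hand ($E_1$ on $\Omega_x$, $E_2$ on $\Omega_y$, $E_{12}$ on $\Omega_{xy}$, with the coarsely meshed direction controlled by the bounded tangential derivatives in \eqref{eq: bounds-I}). For $E_1$ on $\Omega_y$ (and symmetrically $E_2$ on $\Omega_x$, and $E_{12}$ on $\Omega_x\cup\Omega_y$) the mesh width in the layer direction is the coarse $H_x\approx 2N^{-1}$, and
\begin{equation*}
H_x^{2}\,\Vert\partial_{xx}E_1\Vert_{L^{\infty}(\Omega_y)}\le CN^{-2}\varepsilon^{-2}N^{-\rho}
\end{equation*}
is not uniformly bounded in $\varepsilon$ (and bounding the exponential by $1$, as you propose, makes it $CN^{-2}\varepsilon^{-2}$, which is worse). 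These cross terms must instead be handled by the stability-plus-smallness argument you already deploy on $\Omega_s$: on $\Omega_y$ one has $x\le 1-\lambda_x$, hence $\Vert E_1-E_1^{I}\Vert_{L^{\infty}(K)}\le 2\Vert E_1\Vert_{L^{\infty}(K)}\le CN^{-\rho}\le CN^{-2}\ln^{2}N$ since $\rho=2.5$. Because the needed tool is already present in your proof, this is a repairable imprecision in the case analysis rather than a structural gap; with that fix, your proposal is a complete and correct self-contained proof of the lemma the paper only cites.
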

\begin{proof}
See the details in \cite[Theorem 4.2]{Styn1ORior2:1997-uniformly} for the first estimate.
The reader is referred to \cite[Lemma 3.2]{Styn1Tobi2:2003-SDFEM} for the remained bounds.
\end{proof}

%%%%%%%%%%%%%%%%%%%%%%%%%%%%%%%%%%%%%%%%%%%%%%%%%%%%%%%%%%%%%%%%%%%%%%%%%%%%%%%%%%%%%%%%%%%%%%%%%%%%%%%%%%%%
%
%
%%%%%%%%%%%%%%%%%%%%%%%%%%%%%%%%%%%%%%%%%%%%%%%%%%%%%%%%%%%%%%%%%%%%%%%%%%%%%%%%%%%%%%%%%%%%%%%%%%%%%%%%%%%%%
\section{Interpolation and error estimates in the SD norm}
\begin{lemma}\label{lem:u-uI in SD norm}
Let Assumption \ref{assumption-regularity} hold true and $\delta$ be defined in \eqref{eq: delta-K-SD-I}, we have
 $$\Vert u-u^{I}\Vert_{SD} \le CN^{-1}\ln N.$$

\end{lemma}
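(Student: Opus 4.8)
The plan is to split the norm according to its definition \eqref{eq: SD norm} into the $\varepsilon$-energy contribution $\varepsilon|u-u^I|_1^2+\mu_0\Vert u-u^I\Vert^2$ and the streamline-stabilization contribution $\sum_{K}\Vert\delta^{1/2}\boldsymbol{b}\cdot\nabla(u-u^I)\Vert_K^2$, and to estimate the two separately. For the energy part I would use the standard Shishkin-mesh interpolation theory together with the decomposition \eqref{eq:(2.1a)} and the bounds \eqref{eq: bounds-I}: the $L^2$ piece is controlled by the $L^\infty$ estimate of Lemma \ref{lem: bilinear interpolation}, while the $\varepsilon$-weighted $H^1$ seminorm is dominated by the $x$- and $y$-layer terms on the fine-mesh regions, where the familiar balance $\varepsilon\,h^2\Vert\partial_{xx}E_1\Vert^2\sim N^{-2}\ln^2 N$ (with $h=O(\varepsilon N^{-1}\ln N)$ the fine step) yields the target rate. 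This already accounts for the full $CN^{-1}\ln N$; the entire difficulty and novelty lie in showing that the stabilization contribution does not destroy it.

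For the stabilization term I would first note that $\delta\equiv0$ outside $\Omega_s$, so the sum runs only over $K\subset\Omega_s$, and that $|\boldsymbol{b}|\le C$ reduces matters to $\int_{\Omega_s}\delta|\nabla(u-u^I)|^2$. Splitting $u-u^I=(S-S^I)+(E-E^I)$, the smooth part is immediate: $\Vert\nabla(S-S^I)\Vert_{\Omega_s}\le CN^{-1}$ by interpolation on the coarse mesh together with $\delta\le CN^{-1}$ give an $O(N^{-3})$ contribution. For the layer part I would write $|\nabla(E-E^I)|^2\le 2|\nabla E|^2+2|\nabla E^I|^2$ and treat $\nabla E$ directly rather than through an interpolation error. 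The interpolant piece is handled by $\int_{\Omega_s}\delta|\nabla E^I|^2\le\Vert\delta\nabla E^I\Vert_{L^\infty(\Omega_s)}\,\Vert\nabla E^I\Vert_{L^1(\Omega_s)}$, where the second factor is $CN^{-\rho}$ by Lemma \ref{lem: bilinear interpolation} and the first is $O(N^{-\rho})$ because near the interface $|\nabla E^I|\le CN^{-\rho}/H_x$ while $\delta\le CN^{-1}$, giving a negligible $O(N^{-2\rho})$.

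The crux is $\int_{\Omega_s}\delta|\nabla E|^2$, and this is exactly where the modified parameter \eqref{eq: delta-K-SD-I} is needed. I would split $\Omega_s=\Omega_{s,\varepsilon}\cup\Omega^c_{s,\varepsilon}$. On the interior block $\Omega_{s,\varepsilon}$ the distance to the interface is at least $\lambda_x+H_x$, so $|\partial_x E_1|\le C\varepsilon^{-1}e^{-\beta_1(1-x)/\varepsilon}$ integrates to a quantity of size $\varepsilon^{-1}N^{-2\rho}e^{-2\beta_1H_x/\varepsilon}$; using $\varepsilon^{-1}e^{-2\beta_1H_x/\varepsilon}\le CN$ and $\delta\le CN^{-1}$ this is again $O(N^{-2\rho})$. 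On the boundary strip $\Omega^c_{s,\varepsilon}$, where $\nabla E$ is largest, the key computation is that with $s=x_t-x$ one has $\delta\sim CN^{-1}(s/H_x)$ and, writing $1-x=\lambda_x+s$,
\begin{equation*}
\int_0^{H_x}\frac{s}{H_x}\,\varepsilon^{-2}e^{-2\beta_1(\lambda_x+s)/\varepsilon}\,ds
\le \frac{N^{-2\rho}}{H_x}\,\varepsilon^{-2}\int_0^\infty s\,e^{-2\beta_1 s/\varepsilon}\,ds
= C\,\frac{N^{-2\rho}}{H_x}.
\end{equation*}
The linear factor $s$ coming from $\xi$ turns $\int e^{-2\beta_1 s/\varepsilon}\,ds\sim\varepsilon$ into $\int s\,e^{-2\beta_1 s/\varepsilon}\,ds\sim\varepsilon^2$, supplying the extra power of $\varepsilon$ that cancels the $\varepsilon^{-2}$ and leaves an $\varepsilon$-uniform bound; after multiplying by $N^{-1}$ and $H_x^{-1}\sim N$ this is $O(N^{-2\rho})$. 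The analogous estimates for $E_2$ (with $\eta$ near $y_t$) and for the corner layer $E_{12}$ (with both $\xi$ and $\eta$, and a doubly exponentially small prefactor) are similar and even smaller.

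The main obstacle is precisely this last step: with the usual constant choice \eqref{eq: delta-K-usual-I} the same integral only produces $\int e^{-2\beta_1 s/\varepsilon}\,ds\sim\varepsilon$, leaving a factor $\varepsilon^{-1}$ that is not uniform as $\varepsilon\to0$, so the whole argument hinges on quantifying how the linear decay of $\delta$ on $\Omega^c_{s,\varepsilon}$ buys back one power of $\varepsilon$. Collecting the pieces, the stabilization term is $O(N^{-3}+N^{-2\rho})$, hence of lower order, and the energy term dominates, giving $\Vert u-u^I\Vert_{SD}\le CN^{-1}\ln N$.
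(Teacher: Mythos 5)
Your proposal is correct and follows essentially the same route as the paper: both split the SD norm into the $\varepsilon$-energy part (for which the paper simply cites \cite{Styn1ORior2:1997-uniformly} to get the $CN^{-2}\ln^{2}N$ bound you sketch) and the stabilization part, decompose $u-u^{I}$ via \eqref{eq:(2.1a)}, and hinge on exactly the key computation you identify --- splitting $\Omega_{s}$ into $\Omega_{s,\varepsilon}\cup\Omega^{c}_{s,\varepsilon}$ and exploiting the linear decay of $\delta$ in \eqref{eq: delta-K-SD-I} so that $\int_{0}^{H_{x}}s\,e^{-2\beta_{1}s/\varepsilon}\,\mr{d}s\sim\varepsilon^{2}$ cancels the $\varepsilon^{-2}$ from $|\nabla E|^{2}$, which is precisely the paper's \eqref{eq:integral-I}--\eqref{eq:integral-III} and \eqref{eq:II4}. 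The only difference is bookkeeping: you bound $\int\delta|\nabla(u-u^{I})|^{2}$ by a sum of three squares via the triangle inequality, whereas the paper organizes the same estimates through the cross terms $\mr{II}_{1}$--$\mr{II}_{3}$ using $L^{\infty}$--$L^{1}$ pairings and Lemma \ref{lem: bilinear interpolation}; both give $O(N^{-3})$ for the stabilization contribution and hence the stated bound.
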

\begin{proof}
From \eqref{eq: energy norm} and \eqref{eq: SD norm}, we obtain
\begin{align*}
\Vert u-u^{I} \Vert^{2}_{SD} &=\Vert u-u^{I} \Vert^{2}_{\varepsilon}+
\sum_{K\subset\Omega} (\b{b} \cdot\nabla (u-u^{I} ),\delta \b{b} \cdot\nabla (u-u^{I} ))_{K}\\
&=:\mr{I}+\mr{II}.
\end{align*}
The bound of  $\mr{I}$ can be found in \cite[Theorem 4.3]{Styn1ORior2:1997-uniformly}, that is,
\begin{equation}\label{eq: u-uI varepsilon}
|\mr{I}|\le CN^{-2}\ln^{2}N.
\end{equation}
Using the decomposition \eqref{eq:(2.1a)},  we have
\begin{align*}
u-u^{I} = S-S^{ I }+E-E^{ I },
\end{align*}
where $E=E_{1}+E_{2}+E_{12}$.
Note that $\delta=0$ for $(x,y)\in\Omega\setminus\Omega_{s}$.
% and $\delta\le CN^{-1}$ for $(x,y)\in\Omega_{s}$. 
Then, we rewrite $\mr{II}$ as follows:
\begin{align*}
\mr{II}=&\sum_{K\subset\Omega_s}(\b{b} \cdot\nabla (S-S^{ I } ),\delta\b{b} \cdot\nabla (u-u^{I} ))_{K}+\sum_{K\subset\Omega_s} (-\b{b} \cdot\nabla E^{ I },\delta \b{b} \cdot\nabla (u-u^{I} ))_{K}\\
&+\sum_{K\subset\Omega_s}(\b{b} \cdot\nabla E,\delta \b{b} \cdot\nabla (S-S^{ I } -E^I)  )_{K}+\sum_{K\subset\Omega_s}(\b{b} \cdot\nabla E,\delta \b{b} \cdot\nabla E)_{K}\\
=:&\mr{II}_1+\mr{II}_2+\mr{II}_3+\mr{II}_4.
\end{align*}
We will estimate $\mr{II}$ term by term. 

Note that $\delta\le CN^{-1}$ for $(x,y)\in\Omega_{s}$. By standard interpolation theories, the inequalities \eqref{eq: bounds-I} and Lemma \ref{lem: bilinear interpolation} we have
\begin{equation}\label{eq:II1}
|\mr{II}_1|\le
CN^{-1}\Vert  \nabla (S-S^{ I } ) \Vert_{L^{\infty}(\Omega_{s})} \Vert  \nabla (u-u^{I} ) \Vert_{L^{1}(\Omega_{s})}
\le C N^{-3}.
\end{equation}
Inverse estimates \cite[Theorem 3.2.6]{Ciarlet:1978-finite} and Assumption \ref{assumption-regularity} yield
\begin{equation}\label{eq:nabla EI}
\Vert \nabla E^{I}\Vert_{L^{\infty}(\Omega_s)}
\le C N \Vert E^{I}\Vert_{L^{\infty}(\Omega_s)}
\le C N^{1-\rho}.
\end{equation}
Lemma \ref{lem: bilinear interpolation} and the bound \eqref{eq:nabla EI} yield
\begin{equation}\label{eq:II2}
|\mr{II}_2|
\le
CN^{-1} \Vert \nabla E^{I}\Vert_{L^{\infty}(\Omega_s)}  \Vert \nabla(u-u^{I}) \Vert_{L^{1}(\Omega_{s})}
\le C N^{-(1+\rho)},
\end{equation}
and
%By means of \eqref{eq:(2.1c)}--\eqref{eq:(2.1e)}, \eqref{eq: delta-K-SD} and direct calculations, we have
%\begin{equation*}
%\delta\Vert \b{b}\cdot\nabla E \Vert_{L^{\infty}(K)} \le C N^{-\rho},\quad \text{for $K\subset\Omega_{s}$}.
%\end{equation*}
%the decomposition \eqref{eq:(2.1a)} and 
%inverse estimates \cite[Theorem 3.2.6]{Ciarlet:1978-finite} again, then we have
\begin{equation}\label{eq:II3}
\begin{split}
|\mr{II}_3|\le &C N^{-1}\Vert  \nabla E \Vert_{L^{1}(\Omega_s)}
\left(\Vert \nabla(S-S^{ I })\Vert_{L^{\infty}(\Omega_s)}
+\Vert \nabla E^{I}\Vert_{L^{\infty}(\Omega_s)}\right)\\
\le &C (N^{-2}+N^{-\rho})\Vert  \nabla E \Vert_{L^{1}(\Omega_s)}
\le C N^{-(2+\rho)}.
\end{split}
\end{equation}
%\begin{align*}
%&\left|(\b{b} \cdot\nabla E,\delta\b{b} \cdot\nabla (u-u^{I} ))_{K}\right|
%\\
%&\le
%\left|(\b{b} \cdot\nabla E,\delta \b{b} \cdot\nabla (S-S^{ I } ))_{K}\right|
%+\left|(\b{b} \cdot\nabla E,\delta\b{b} \cdot\nabla E)_{K}\right|
%+\left| (\b{b} \cdot\nabla E,\delta \b{b} \cdot\nabla E^{ I })_{K} \right|\\
% &\le
%C N^{-1}\Vert  \nabla E \Vert_{L^{1}(K)}
%\left(\Vert \nabla(S-S^{ I })\Vert_{L^{\infty}(K)}
%+\Vert \nabla E^{I}\Vert_{L^{\infty}(K)}\right)
%+\left|(\b{b} \cdot\nabla E,\delta\b{b} \cdot\nabla E)_{K}\right|
%\\
%&\le C (N^{-2}+N^{-\rho})\Vert  \nabla E \Vert_{L^{1}(K)}
%+\left|(\b{b} \cdot\nabla E,\delta\b{b} \cdot\nabla E)_{K}\right|.
%\end{align*}
To bound the term $\mr{II}_4$, we present the following estimates first.
Note that  $0\le xe^{-x}\le e^{-1}$ for $x\ge 0$. Then  we have 
\begin{equation}\label{eq:integral-I}
\begin{split}
\int_0^{x_s}e^{-\frac{2\beta_1(1-x)}{\varepsilon}  }\mr{d}x
&=\int_0^{x_s}e^{-\frac{2\beta_1(1-x_t)}{\varepsilon}  } e^{-\frac{2\beta_1(x_t-x)}{\varepsilon}  }\mr{d}x\\
&=
N^{-2\rho} \frac{\varepsilon}{2\beta_1} 
\left( e^{-\frac{2\beta_1H_x}{\varepsilon}  } -e^{-\frac{2\beta_1x_t}{\varepsilon}  }\right)\\
&\le
N^{-2\rho}  \left(\frac{\varepsilon}{2\beta_1}\right)^2H^{-1}_x\cdot  \frac{2\beta_1H_x}{\varepsilon} e^{-\frac{2\beta_1H_x}{\varepsilon}  } \\
&\le
C\varepsilon^2 N^{1-2\rho},
\end{split}
\end{equation}
and 
\begin{equation}\label{eq:integral-II}
\begin{split}
&\int_{x_s}^{x_t}e^{-\frac{2\beta_1(1-x)}{\varepsilon}  }  \frac{x_t-x}{H_x}\mr{d}x
=\int_{x_s}^{x_t}e^{-\frac{2\beta_1(1-x_t)}{\varepsilon}  } e^{-\frac{2\beta_1(x_t-x)}{\varepsilon}  } \frac{x_t-x}{H_x} \mr{d}x\\
&\xlongequal{\xi=x_t-x}
N^{-2\rho}H_x^{-1} \int_0^{H_x}e^{-\frac{2\beta_1 \xi}{\varepsilon} }\xi \mr{d}\xi\\
&=N^{-2\rho} H_x^{-1}\left(\frac{\varepsilon}{2\beta_1}\right)^2
\left(
-   \frac{2\beta_1H_x}{\varepsilon} e^{-\frac{2\beta_1H_x}{\varepsilon}  } 
+ 1- e^{-\frac{2\beta_1H_x}{\varepsilon} }  \right)
\\
&\le
C\varepsilon^2 N^{1-2\rho}. 
\end{split}
\end{equation}
Similarly, we have
\begin{equation}\label{eq:integral-III}
\int_0^{y_s}e^{-\frac{2\beta_2(1-y)}{\varepsilon}  }\mr{d}y\le
C\varepsilon^2 N^{1-2\rho},
\quad
\int_{y_s}^{y_t}e^{-\frac{2\beta_2(1-y)}{\varepsilon}  }  \frac{y_t-y}{H_y}\mr{d}y
\le
C\varepsilon^2 N^{1-2\rho}. 
\end{equation}
According to the definition of $\delta$ in \eqref{eq: delta-K-SD-I}, we split 
$\mr{II}_4$ into two parts. Then 
\begin{equation}\label{eq:II4}
\begin{split}
|\mr{II}_4|
=&\big( \sum_{K\subset\Omega_{s,\varepsilon}}
+\sum_{K\subset\Omega^{c}_{s,\varepsilon}} \big)
\left|(\b{b} \cdot\nabla E,\delta\b{b} \cdot\nabla E)_{K}\right|\\
\le&
CN^{-1}\Vert \nabla E \Vert^2_{ \Omega_{s,\varepsilon} }
+C\iint \limits_{ \Omega^c_{s,\varepsilon} }\varepsilon^{-2}\left( e^{-\frac{2\beta_1(1-x)}{\varepsilon}  }+  e^{-\frac{2\beta_2(1-y)}{\varepsilon}  } \right)\delta(x,y)\mr{d}x\mr{d}y\\
\le & C N^{-2\rho},
\end{split}
\end{equation}
where we have used \eqref{eq:integral-I}--\eqref{eq:integral-III} and direct calculations.

Collecting \eqref{eq:II1}, \eqref{eq:II2}, \eqref{eq:II3} and \eqref{eq:II4}, we  obtain 
\begin{equation}\label{eq: stabilization in SD norm}
|\mr{II}|\le CN^{-3}.
\end{equation}

\noindent Combine \eqref{eq: u-uI varepsilon} and \eqref{eq: stabilization in SD norm}, then we are done.
\end{proof}

\begin{remark}
We can obtain local estimates in $\Omega_{s}$ 
%similarly, i.e., $\Vert u-u^{I} \Vert_{\varepsilon;\,\Omega_{s}}$ and $\Vert u-u^{I} \Vert_{SD;\,\Omega_{s}}$, 
 in the same way as in Lemma \ref{lem:u-uI in SD norm}. If Assumption \ref{assumption-regularity} holds true,  we have
\begin{equation}\label{eq: u-uI-varepsilon-Omega-s}
\Vert u-u^{I} \Vert^2_{\varepsilon;\,\Omega_{s}}
=\varepsilon \Vert \nabla (u-u^I) \Vert^2_{\Omega_s}+\mu_{0}\Vert  u-u^I  \Vert^2_{\Omega_s }
\le C(\varepsilon N^{-2}+N^{-4}).
\end{equation}
Note that analysis of $\varepsilon \Vert \nabla (u-u^I) \Vert^2_{\Omega_s}$ is similar to one of $\mr{II}$ in Lemma \ref{lem:u-uI in SD norm}.
Moreover, if $\delta$ is defined   in \eqref{eq: delta-K-SD-I}, the bounds 
\eqref{eq: u-uI-varepsilon-Omega-s} and \eqref{eq: stabilization in SD norm} yield
 \begin{equation}\label{eq: u-uI-SD-Omega-s}
 \begin{split}
\Vert u-u^{I} \Vert^2_{SD;\,\Omega_{s}}
&=\Vert u-u^{I} \Vert^2_{\varepsilon;\,\Omega_{s}}+\sum_{K\subset\Omega} (\b{b} \cdot\nabla (u-u^{I} ),\delta \b{b} \cdot\nabla (u-u^{I} ))_{K}\\
&\le CN^{-3}.
\end{split}
\end{equation}
\end{remark}

\begin{lemma}\label{lem:uI-uN in SD norm}
Let Assumption \ref{assumption-regularity} hold true and $\delta$ be defined in \eqref{eq: delta-K-SD-I} or in \eqref{eq: delta-K-usual-I},  then we have
$$
\Vert u^{I}-u^{N}\Vert_{SD} \le C(\varepsilon N^{-3/2}+N^{-2}\ln^{2}N).
$$
\end{lemma}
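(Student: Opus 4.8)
The plan is to combine the coercivity of $a_{SD}(\cdot,\cdot)$ in the SD norm with the Galerkin orthogonality of the SDFEM, reducing the whole estimate to a consistency bound. First I would invoke that, for $\delta$ given by \eqref{eq: delta-K-SD-I} or \eqref{eq: delta-K-usual-I} with $C^{\ast}$ chosen as in \cite[Lemma 3.25]{Roo1Sty2Tob3:2008-Robust}, the form is coercive, namely $a_{SD}(v^{N},v^{N})\ge C\Vert v^{N}\Vert_{SD}^{2}$ for all $v^{N}\in V^{N}$ (here $\Delta v^{N}=0$ on each $K$, so only $\b{b}\cdot\nabla v^{N}$ and $cv^{N}$ enter the stabilization). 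Since $u$ solves \eqref{eq:model problem}, the method is consistent and $a_{SD}(u-u^{N},v^{N})=0$ for all $v^{N}\in V^{N}$. Setting $\chi:=u^{I}-u^{N}\in V^{N}$ and $\eta:=u-u^{I}$,
\[
C\Vert\chi\Vert_{SD}^{2}\le a_{SD}(\chi,\chi)=a_{SD}(u^{I}-u,\chi)=-a_{SD}(\eta,\chi),
\]
so it suffices to show $|a_{SD}(\eta,\chi)|\le C(\varepsilon N^{-3/2}+N^{-2}\ln^{2}N)\Vert\chi\Vert_{SD}$ and divide by $\Vert\chi\Vert_{SD}$.

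Next I would expand, using $\Delta\eta=\Delta u$ (because $u^{I}$ is piecewise bilinear),
\[
a_{SD}(\eta,\chi)=\varepsilon(\nabla\eta,\nabla\chi)+(\b{b}\cdot\nabla\eta+c\eta,\chi)+\sum_{K\subset\Omega}(-\varepsilon\Delta u+\b{b}\cdot\nabla\eta+c\eta,\delta\b{b}\cdot\nabla\chi)_{K}.
\]
The decisive step is integration by parts in the convective term; since $\eta$ vanishes on $\partial\Omega$,
\[
(\b{b}\cdot\nabla\eta,\chi)=-(\eta,\b{b}\cdot\nabla\chi)-(\eta,(\nabla\cdot\b{b})\chi),
\]
which removes the factor $\varepsilon^{-1}$ hidden in $\nabla\eta$ inside the layers and replaces it by the $L^{2}$-smallness of $\eta$ paired with the quantities $\Vert\chi\Vert$ and $\Vert\delta^{1/2}\b{b}\cdot\nabla\chi\Vert_{K}$ already controlled by $\Vert\chi\Vert_{SD}$.

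The remaining work is bookkeeping of the same type as in Lemma \ref{lem:u-uI in SD norm}. I would insert the decomposition $u=S+E$ with $E=E_{1}+E_{2}+E_{12}$, split $\Omega$ into $\Omega_{s},\Omega_{x},\Omega_{y},\Omega_{xy}$ (noting $\delta\equiv0$ off $\Omega_{s}$, so every stabilization contribution lives on $\Omega_{s}$), and on $\Omega_{s}$ separate $\Omega_{s,\varepsilon}$ from $\Omega^{c}_{s,\varepsilon}$. The regular part $S-S^{I}$ is treated with the anisotropic superconvergence properties of bilinear interpolation, which sharpen the crude bounds of Lemma \ref{lem: bilinear interpolation} to the order $N^{-2}$ needed here; the layer part $E-E^{I}$ is controlled by \eqref{eq: bounds-I} together with the exponential integral estimates \eqref{eq:integral-I}--\eqref{eq:integral-III}, whose exponential smallness in $\Omega_{s}$ renders those contributions of order $N^{-\rho}$. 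After each splitting I would apply Cauchy--Schwarz and absorb $\varepsilon^{1/2}|\chi|_{1}$, $\mu_{0}^{1/2}\Vert\chi\Vert$ and $\Vert\delta^{1/2}\b{b}\cdot\nabla\chi\Vert_{\Omega_{s}}$ into $\Vert\chi\Vert_{SD}$; the surviving $\varepsilon$-weighted diffusion and $-\varepsilon\Delta u$ stabilization contributions retain a power of $\varepsilon$ and produce the $\varepsilon N^{-3/2}$ term, while the zero-order and the regular-part contributions give $N^{-2}\ln^{2}N$.

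I expect the main obstacle to be the convective and stabilization terms, not the zero-order or diffusion ones. Because the new $\delta$ in \eqref{eq: delta-K-SD-I} degenerates linearly to $0$ across $\Omega^{c}_{s,\varepsilon}$ and vanishes on the layer subdomains, the weight $\Vert\delta^{1/2}\b{b}\cdot\nabla\chi\Vert$ is too weak to absorb $(\eta,\b{b}\cdot\nabla\chi)$ uniformly there, so one must instead balance $\Vert\chi\Vert$ against sharp, direction-dependent bounds on $\eta$ region by region, and keep track of the $\varepsilon$ weights so as not to lose the gain that yields $\varepsilon N^{-3/2}$ rather than $N^{-1}$. The estimate $\mr{meas}\,(\Omega^{c}_{s,\varepsilon})\le CN^{-1}$ and the integrals \eqref{eq:integral-I}--\eqref{eq:integral-III} are precisely what make this balancing succeed.
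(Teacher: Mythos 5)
Your overall strategy---coercivity of $a_{SD}(\cdot,\cdot)$ on $V^N$, consistency of the SDFEM giving $a_{SD}(u-u^{N},v^{N})=0$, reduction to bounding $a_{SD}(u-u^{I},\chi)$, integration by parts of the convective term, and the decomposition $u=S+E$ with region-by-region bookkeeping---is precisely the route of the proof the paper points to: the paper's entire ``proof'' of this lemma is a citation to \cite[Theorem 4.5]{Styn1Tobi2:2003-SDFEM}, and that theorem is proved along the lines you describe. So in approach you agree with the source. But your sketch leaves unexecuted exactly the steps where the whole difficulty of the lemma sits, and in two places the outline, taken literally, would not reach the stated rate. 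First, you control the layer part $E-E^{I}$ only through its exponential smallness via \eqref{eq: bounds-I} and \eqref{eq:integral-I}--\eqref{eq:integral-III}; those integrals live on $\Omega_{s}$ and say nothing about $\Omega_{x}$, $\Omega_{y}$, $\Omega_{xy}$, where the Galerkin terms $\varepsilon(\nabla(E-E^{I}),\nabla\chi)$ and $(E-E^{I},\boldsymbol{b}\cdot\nabla\chi)$ still live (only the stabilization vanishes off $\Omega_{s}$). There $\vert E-E^{I}\vert_{1}\sim \varepsilon^{-1/2}N^{-1}\ln N$, so crude Cauchy--Schwarz yields only $CN^{-1}\ln N\,\Vert\chi\Vert_{SD}$, an order short of $N^{-2}\ln^{2}N$; the anisotropic Lin-type integral identities you invoke only for $S-S^{I}$ must be applied to the layer parts on the fine mesh as well, and this is where most of the effort in the cited Theorem 4.5 goes. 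Second, your claim that the $-\varepsilon\Delta u$ stabilization contribution ``retains a power of $\varepsilon$ and produces $\varepsilon N^{-3/2}$'' is not delivered by the estimate your sketch makes available: weighted Cauchy--Schwarz gives $\sum_{K\subset\Omega_{s}}\vert(-\varepsilon\Delta S,\delta\boldsymbol{b}\cdot\nabla\chi)_{K}\vert\le \varepsilon\Vert\delta^{1/2}\Delta S\Vert_{\Omega_{s}}\Vert\chi\Vert_{SD}\le C\varepsilon N^{-1/2}\Vert\chi\Vert_{SD}$ (since $\Vert\Delta S\Vert_{\Omega_{s}}\le C$ and $\delta\le CN^{-1}$), a full factor $N$ short of $\varepsilon N^{-3/2}$; extracting the missing factor requires the finer argument of the cited proof, which your outline does not indicate.

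On the other hand, the last paragraph of your proposal contains a point that goes beyond the paper and deserves credit. The lemma is asserted for both choices of $\delta$, but \cite[Theorem 4.5]{Styn1Tobi2:2003-SDFEM} covers only the piecewise-constant $\delta$ of \eqref{eq: delta-K-usual-I}; the paper silently extends the citation to the degenerate $\delta$ of \eqref{eq: delta-K-SD-I}, for which $u^{N}$ is a different function. You correctly identify the one step of the standard proof that breaks for the new $\delta$: the absorption $\vert(u-u^{I},\boldsymbol{b}\cdot\nabla\chi)_{\Omega_{s}}\vert\le\Vert\delta^{-1/2}(u-u^{I})\Vert_{\Omega_{s}}\Vert\delta^{1/2}\boldsymbol{b}\cdot\nabla\chi\Vert_{\Omega_{s}}$ is unavailable on $\Omega^{c}_{s,\varepsilon}$---indeed $\int\delta^{-1}(u-u^{I})^{2}$ diverges in general, since $u-u^{I}$ does not vanish along the lines $x=x_{t}$, $y=y_{t}$ where $\delta=0$---and your proposed remedy (balancing $\Vert\chi\Vert$ against sharp local bounds on $u-u^{I}$ using $\mr{meas}\,(\Omega^{c}_{s,\varepsilon})\le CN^{-1}$, noting also that the new $\delta$ is pointwise smaller so every term in which $\delta$ enters positively only improves) is the right kind of repair. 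So: correct architecture, a correct diagnosis of the $\delta$-degeneracy issue that the paper glosses over, but the superconvergence machinery that actually produces $\varepsilon N^{-3/2}+N^{-2}\ln^{2}N$ is left as a black box.
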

\begin{proof}
See the details in \cite[Theorem 4.5]{Styn1Tobi2:2003-SDFEM}.
\end{proof}

By the above lemmas we have the following theorem.
\begin{theorem}\label{theorem: u-uN-Omega s}
Let Assumption \ref{assumption-regularity} hold true. If $\delta$ is defined  in \eqref{eq: delta-K-SD-I},  we have
\begin{align}
&\Vert u-u^{N}\Vert_{SD} \le CN^{-1}\ln N,
\label{eq: u-uN SD}\\
&\Vert u-u^{N}\Vert_{SD;\,\Omega_{s}} \le CN^{-3/2}
\label{eq: u-uN SD Omega-s}.
\end{align}
If $\delta$ is defined  in \eqref{eq: delta-K-SD-I} or in \eqref{eq: delta-K-usual-I},  we have
\begin{equation}\label{eq: u-uN varepsilon-Omega-s}
\Vert u-u^{N}\Vert_{\varepsilon;\,\Omega_{s}} \le C (\varepsilon^{1/2}N^{-1}+N^{-2}\ln^{2}N).
\end{equation}
\end{theorem}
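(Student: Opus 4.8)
The plan is to derive all three estimates from the triangle inequality applied to the splitting $u-u^{N}=(u-u^{I})+(u^{I}-u^{N})$, feeding the interpolation bounds of Lemma~\ref{lem:u-uI in SD norm} and its accompanying remark into the first summand and the discrete bound of Lemma~\ref{lem:uI-uN in SD norm} into the second. Throughout I would exploit the standing hypothesis $\varepsilon\le N^{-1}$ to absorb the lower-order contributions, so that no new analytic work is required.

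For \eqref{eq: u-uN SD} I would write $\Vert u-u^{N}\Vert_{SD}\le\Vert u-u^{I}\Vert_{SD}+\Vert u^{I}-u^{N}\Vert_{SD}$, bounding the first term by $CN^{-1}\ln N$ via Lemma~\ref{lem:u-uI in SD norm} and the second by $C(\varepsilon N^{-3/2}+N^{-2}\ln^{2}N)$ via Lemma~\ref{lem:uI-uN in SD norm}. Since $\varepsilon N^{-3/2}\le N^{-5/2}$ and $N^{-2}\ln^{2}N$ are both dominated by $N^{-1}\ln N$, the claimed bound follows at once. For \eqref{eq: u-uN SD Omega-s} the same splitting gives $\Vert u-u^{N}\Vert_{SD;\,\Omega_{s}}\le\Vert u-u^{I}\Vert_{SD;\,\Omega_{s}}+\Vert u^{I}-u^{N}\Vert_{SD;\,\Omega_{s}}$; here the first term is $\le CN^{-3/2}$ by \eqref{eq: u-uI-SD-Omega-s}, while the second I would control by majorising the local SD norm with the global one and invoking Lemma~\ref{lem:uI-uN in SD norm}. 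As $\varepsilon N^{-3/2}\le N^{-5/2}$ and $N^{-2}\ln^{2}N\le CN^{-3/2}$, this yields the stated estimate.

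For \eqref{eq: u-uN varepsilon-Omega-s} I would first record the ordering $\Vert\cdot\Vert_{\varepsilon;\,\Omega_{s}}\le\Vert\cdot\Vert_{SD}$, which holds because the restricted energy norm drops the nonnegative $\delta$-weighted streamline term and narrows the integration domain; consequently $\Vert u^{I}-u^{N}\Vert_{\varepsilon;\,\Omega_{s}}\le\Vert u^{I}-u^{N}\Vert_{SD}$ is controlled by Lemma~\ref{lem:uI-uN in SD norm} under \emph{both} choices of $\delta$. The interpolation part $\Vert u-u^{I}\Vert_{\varepsilon;\,\Omega_{s}}\le C(\varepsilon^{1/2}N^{-1}+N^{-2})$ follows from \eqref{eq: u-uI-varepsilon-Omega-s} after taking square roots, and it is manifestly independent of $\delta$. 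Combining the two and using $\varepsilon N^{-3/2}=\varepsilon^{1/2}(\varepsilon^{1/2}N^{-1/2})N^{-1}\le\varepsilon^{1/2}N^{-2}\le\varepsilon^{1/2}N^{-1}$ delivers the bound for either definition of $\delta$.

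The computation is pure bookkeeping, so I do not expect a genuine obstacle. The only points demanding care are the norm ordering that makes the restricted energy norm weaker than the global SD norm for both parameter choices, and the elementary comparisons via $\varepsilon\le N^{-1}$ that let the discrete error $\varepsilon N^{-3/2}$ be absorbed into the advertised right-hand sides. All the analytic difficulty has already been discharged in Lemmas~\ref{lem:u-uI in SD norm} and~\ref{lem:uI-uN in SD norm}, and the theorem is simply a clean assembly of those estimates.
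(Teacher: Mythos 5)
Your proposal is correct and follows essentially the same route as the paper: the triangle inequality on $u-u^{N}=(u-u^{I})+(u^{I}-u^{N})$, with Lemma~\ref{lem:u-uI in SD norm} and the remark bounds \eqref{eq: u-uI-varepsilon-Omega-s}--\eqref{eq: u-uI-SD-Omega-s} handling the interpolation part, Lemma~\ref{lem:uI-uN in SD norm} (valid for both choices of $\delta$) handling the discrete part via the domination $\Vert\cdot\Vert_{\varepsilon;\,\Omega_{s}}\le\Vert\cdot\Vert_{SD}$, and $\varepsilon\le N^{-1}$ absorbing the lower-order terms. Your write-up merely makes explicit the elementary comparisons the paper leaves implicit.
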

\begin{proof}
Combining Lemmas \ref{lem:u-uI in SD norm} and \ref{lem:uI-uN in SD norm},  we obtain
\eqref{eq: u-uN SD}.   Lemma \ref{lem:uI-uN in SD norm}  and  \eqref{eq: u-uI-SD-Omega-s}  yield 
\eqref{eq: u-uN SD Omega-s}. Note that $\Vert u^{I}-u^{N}\Vert_{\varepsilon}\le  \Vert u^{I}-u^{N} \Vert_{SD}$. Thus
from \eqref{eq: u-uI-varepsilon-Omega-s} and  Lemma \ref{lem:uI-uN in SD norm}, we have \eqref{eq: u-uN varepsilon-Omega-s}.
\end{proof}

%%%%%%%%%%%%%%%%%%%%%%%%%%%%%%%%%%%%%%%%%%%%%%%%%%%%%%%%%%%%%%%%%%%%%%%%%%%%%%%%%%%%%%%%%%%%%%%%%%%%%%%%%%%%
%
%
%%%%%%%%%%%%%%%%%%%%%%%%%%%%%%%%%%%%%%%%%%%%%%%%%%%%%%%%%%%%%%%%%%%%%%%%%%%%%%%%%%%%%%%%%%%%%%%%%%%%%%%%%%%%%

%%%%%%%%%%%%%%%%%%%%%%%%%%%%%%%%%%%%%%%%%%%%%%%%%%%%%%%%%%%%%%%%%%%%%%%%%%%%%%%%%%%%%%%%%%%%%%%%%%%%%%%%%%%%
%
%
%%%%%%%%%%%%%%%%%%%%%%%%%%%%%%%%%%%%%%%%%%%%%%%%%%%%%%%%%%%%%%%%%%%%%%%%%%%%%%%%%%%%%%%%%%%%%%%%%%%%%%%%%%%%%
\section{Numerical results}
\noindent
In this section we give the numerical results that appear to support our theoretical results. Errors and convergence rates in different norms are presented. Numerical experiments show that our new stabilization parameter  preserves  high accuracy and numerical stability as
the standard one. 
%Moreover with the new stabilization parameters  we can theoretically derive uniform error bounds  in the streamline diffusion norm.   

All calculations were carried out by using Intel visual Fortran 11. The discrete problems
were solved by the nonsymmetric iterative solver GMRES(c.f. e.g.,\cite{Ben1Gol2Lie3:2005-Numerical,Saad1Schu2:1986-GMRES}).

\noindent
\textbf{Problem}.
\begin{equation}\label{eq:problem-I}
\begin{split}
-\varepsilon\Delta u+2u_{x}+u_{y}+u=&f(x,y) \quad \text{in $\Omega=(0,1)^{2}$},\\
u=&0  \quad\quad\text{on $\partial\Omega$},
\end{split}
\end{equation}
where the right-hand side  $f$ is chosen such that
\begin{equation*}
u(x,y)=2\sin x\left(1-e^{-\frac{2(1-x)}{\varepsilon} } \right)
y^{2} \left( 1-e^{-\frac{(1-y)}{\varepsilon} } \right)
\end{equation*}
is the exact solution.

The errors in Tables \ref{table: vivi-10}--\ref{table: vivi-20} are measured as follows
\begin{equation*}
e^{N}_{SD}:= \left( \sum_{K\subset \Omega}\Vert u-u^{N} \Vert^{2}_{SD,K}\right)^{1/2},\;\;
e^{N}_{\varepsilon}:=\left( \sum_{K\subset \Omega}\Vert u-u^{N} \Vert^{2}_{\varepsilon,K}\right)^{1/2},
\end{equation*}
where $u^{N}$ is the SDFEM solution.

The corresponding rates of convergence $p^{N}$ are computed from the formula
\begin{equation}\label{eq: convergence order formula}
    p^{N}=\frac{\ln e^{N}-\ln e^{2N}}{\ln2},
\end{equation}
where $e^{N}$ can be $e^{N}_{SD}$, $e^{N}_{\varepsilon;\,\Omega_{s} }$ or $e^{N}_{SD;\,\Omega_{s} }$.

Tables \ref{table: vivi-10} and \ref{table: vivi-11} present the errors and convergence rates of $\Vert u-u^{N}\Vert_{\varepsilon;\,\Omega_{s} }$, which support the theorectical bound \eqref{eq: u-uN varepsilon-Omega-s}. Moreover, we  observe that if $\varepsilon\le N^{-2}$
 the convergence order of $\Vert u-u^{N}\Vert_{\varepsilon;\,\Omega_{s} }$ is almost $2$. If $\varepsilon\ge N^{-2}$,  maybe $\varepsilon^{1/2}N^{-1}$ dominates the bound of
 $\Vert u-u^N\Vert_{\varepsilon; \Omega_s}$.

Table \ref{table: vivi-12} gives the errors and convergence rates of  $\Vert u-u^{N} \Vert_{SD;\,\Omega_{s}}$, which show that
the convergence  order of $\Vert u-u^{N}\Vert_{SD;\,\Omega_{s} }$ is $3/2$.

\begin{table}
\caption{$\delta$ in \eqref{eq: delta-K-usual-I}}
\footnotesize
\begin{tabular*}{\textwidth}{@{\extracolsep{\fill}}c cc cc cc}
\hline
  & \multicolumn{2}{c}{$\varepsilon=10^{-4}$} &  \multicolumn{2}{c}{$\varepsilon=10^{-6}$} & \multicolumn{2}{c}{$\varepsilon=10^{-8},10^{-10},\ldots,10^{-16}$}\\
  \hline
$N$   & $ \Vert u-u^{N}\Vert_{\varepsilon;\,\Omega_{s}}$   & Rate   & $ \Vert u-u^{N}\Vert_{\varepsilon;\,\Omega_{s} }$    & Rate & $ \Vert u-u^{N}\Vert_{\varepsilon;\,\Omega_{s} }$    & Rate\\
\hline
8   &$7.79\times10^{-3}$  & $1.78$    &$7.65\times10^{-3}$  & $1.84$  &$7.64\times10^{-3}$  & $1.84$  \\
%\hline
16  &$2.26\times10^{-3}$   &$1.87$  &$2.13\times10^{-3}$   &$2.14$ &$2.13\times10^{-3}$   &$2.14$\\
%\hline
32  &$6.19\times10^{-4}$   &$1.46$  & $4.85\times10^{-4}$   &$2.05$  &$4.83\times10^{-4}$   &$2.06$\\
%\hline
64  &$2.25\times10^{-4}$   &$1.16$ &$1.17\times10^{-4}$   &$1.97$   &$1.16\times10^{-4}$   &$2.03$\\
%\hline
128  &$1.01\times10^{-4}$   &$1.04$  &$3.00\times10^{-5}$   &$1.81$ &$2.85\times 10^{-5}$   &$2.01$\\
%\hline
256  &$4.87\times10^{-5}$   &$1.01$  &$8.55\times10^{-6}$   &$1.52$ &$7.07\times10^{-6}$   &$2.00$\\
512  &$2.41\times10^{-5}$   &---&$2.99\times10^{-6}$            &---&$1.77\times10^{-6}$   &---\\
\hline
\end{tabular*}
\label{table: vivi-10}
\end{table}

\begin{table}
\caption{$\delta$ in \eqref{eq: delta-K-SD-I}}
\footnotesize
\begin{tabular*}{\textwidth}{@{\extracolsep{\fill}}c cc cc cc}
\hline
  & \multicolumn{2}{c}{$\varepsilon=10^{-4}$} &  \multicolumn{2}{c}{$\varepsilon=10^{-6}$} & \multicolumn{2}{c}{$\varepsilon=10^{-8},10^{-10},\ldots,10^{-16}$}\\
  \hline
$N$   & $ \Vert u-u^{N}\Vert_{\varepsilon;\,\Omega_{s}}$   & Rate   & $ \Vert u-u^{N}\Vert_{\varepsilon;\,\Omega_{s} }$    & Rate & $ \Vert u-u^{N}\Vert_{\varepsilon;\,\Omega_{s} }$    & Rate\\
\hline
8   &$1.08\times10^{-2}$  & $1.82$    &$1.07\times10^{-2}$  & $1.85$  &$1.07\times10^{-2}$  & $1.85$  \\
%\hline
16  &$3.05\times10^{-3}$   &$2.08$  &$2.96\times10^{-3}$   &$2.27$ &$2.96\times10^{-3}$   &$2.27$ \\
%\hline
32  &$7.23\times10^{-4}$   &$1.63$  & $6.12\times10^{-4}$   &$2.19$  &$6.11\times10^{-4}$   &$2.20$\\
%\hline
64  &$2.34\times10^{-4}$   &$1.21$ &$1.34\times10^{-4}$   &$2.06$   &$1.33\times10^{-4}$   &$2.12$\\
%\hline
128  &$1.01\times10^{-4}$   &$1.05$  &$3.21\times10^{-5}$   &$1.87$ &$3.06\times10^{-5}$   &$2.06$\\
%\hline
256  &$4.87\times10^{-5}$   &$1.01$  &$8.77\times10^{-6}$   &$1.55$ &$7.07\times10^{-6}$   &$2.00$\\
512  &$2.41\times10^{-5}$   &---&$3.01\times10^{-6}$            &---&$1.81\times10^{-6}$   &---\\
\hline
\end{tabular*}
\label{table: vivi-11}
\end{table}

\begin{table}
\caption{$\varepsilon=10^{-4},10^{-6},\ldots,10^{-16}$}
\footnotesize
\begin{tabular*}{\textwidth}{@{\extracolsep{\fill}}c cc cc}
\hline
  & \multicolumn{2}{c}{$\delta$ in \eqref{eq: delta-K-usual-I}  } &  \multicolumn{2}{c}{$\delta$ in \eqref{eq: delta-K-SD-I}  }\\
  \hline
$N$   & $ \Vert u-u^{N}\Vert_{SD;\,\Omega_{s}}$   & Rate   & $ \Vert u-u^{N}\Vert_{SD;\,\Omega_{s} }$    & Rate\\
\hline
8   &$1.80\times10^{-1}$  & $1.50$    &$1.37\times10^{-1}$  & $1.29$  \\
%\hline
16  &$6.38\times10^{-2}$   &$1.50$  &$5.59\times10^{-2}$   &$1.41$   \\
%\hline
32  &$2.25\times10^{-2}$   &$1.50$  & $2.11\times10^{-2}$   &$1.45$   \\
%\hline
64  &$7.96\times10^{-3}$   &$1.50$ &$7.70\times10^{-3}$   &$1.48$  \\
%\hline
128  &$2.81\times10^{-3}$   &$1.50$  &$2.77\times10^{-3}$   &$1.49$  \\
%\hline
256  &$9.94\times10^{-4}$   & $1.50$  &$9.86\times10^{-4}$   &$1.49$  \\
512  &$3.52\times10^{-4}$   &---&$3.50\times10^{-4}$            &--- \\
\hline
\end{tabular*}
\label{table: vivi-12}
\end{table}

%%%%%%%%%%%%%%%%%%%%%%%%%%%%%%%%%%%%%%%%%%%%%%%%%%%%%%%%%%%%%%%%%%%%%%%%%%%%%%%%%%%%
 In Table \ref{table: vivi-20}, the errors and convergence rates for $\Vert u-u^{N}\Vert_{SD}$ and $\Vert u-u^{N} \Vert_{\varepsilon}$ are displayed, which support \eqref{eq: u-uN SD}. We observe  similar bounds and convergence orders of $u-u^N$  with $\delta$ defined in \eqref{eq: delta-K-SD-I} or \eqref{eq: delta-K-usual-I}.

Plots \ref{fig:alex-1}---\ref{fig:alex-4} show that with the new  stabilization parameter $\delta$,  the SDFEM  solutions still preserve high accuracy and numerical stability.  Figures \ref{fig:alex-1} and \ref{fig:alex-2} present pointwise errors in the computational domain $\Omega$,  Figures \ref{fig:alex-3} and \ref{fig:alex-4}
in one exponential layer.
 These plots show that there are no visible oscillations in the SDFEM solutions with $\delta$ in \eqref{eq: delta-K-SD-I}, as the SDFEM solutions with $\delta$ in \eqref{eq: delta-K-usual-I}.

\begin{table}
\caption{$\varepsilon=10^{-4},10^{-6},\ldots,10^{-16}$}
\footnotesize
\begin{tabular*}{\textwidth}{@{\extracolsep{\fill}}c cc cc  cc  cc}
\hline
          &\multicolumn{4}{c}{$\delta$  in \eqref{eq: delta-K-usual-I} }  & \multicolumn{4}{c}{ $\delta$ in \eqref{eq: delta-K-SD-I}  }\\
          \hline
$N$    & $ \Vert u-u^{N}\Vert_{\varepsilon}$    & Rate & $ \Vert u-u^{N}\Vert_{SD}$   & Rate    & $ \Vert u-u^{N}\Vert_{\varepsilon}$    & Rate & $ \Vert u-u^{N}\Vert_{SD}$   & Rate  \\
\hline
8   &$4.19\times10^{-1}$  & $0.63$    &$4.56\times10^{-1}$      &    $0.71$
&$4.19\times10^{-1}$  & $0.63$    &$4.41\times10^{-1}$      &    $0.67$\\
%\hline
16  &$2.71\times10^{-1}$   &$0.70$  &$2.79\times10^{-1}$      &    $0.73$
&$2.77\times10^{-1}$   &$0.70$  &$2.79\times10^{-1}$      &    $0.72$\\
%\hline
32  &$1.67\times10^{-1}$   &$0.75$   &$1.68\times10^{-1}$     &    $0.76$
&$1.67\times10^{-1}$   &$0.75$   &$1.68\times10^{-1}$     &    $0.75$\\
%\hline
64  &$9.93\times10^{-2}$   &$0.78$&$9.96\times10^{-2}$       &    $0.78$
&$9.93\times10^{-2}$   &$0.78$&$9.96\times10^{-2}$       &    $0.78$\\
%\hline
128  &$5.78\times10^{-2}$   &$0.81$&$5.78\times10^{-2}$      &    $0.81$
&$5.78\times10^{-2}$   &$0.81$&$5.78\times10^{-2}$      &    $0.81$\\
%\hline
256  &$3.30\times10^{-2}$   &0.83&$3.30\times10^{-2}$            &0.83
&$3.30\times10^{-2}$   &0.83&$3.30\times10^{-2}$            &0.83\\
512  &$1.85\times10^{-2}$   &---&$1.85\times10^{-2}$            &---
 &$1.85\times10^{-2}$   &---&$1.85\times10^{-2}$            &---\\
 \hline
\end{tabular*}
\label{table: vivi-20}
\end{table}

\begin{figure}
\begin{minipage}[t]{0.5\linewidth}
\centering
\includegraphics[width=2.5in]{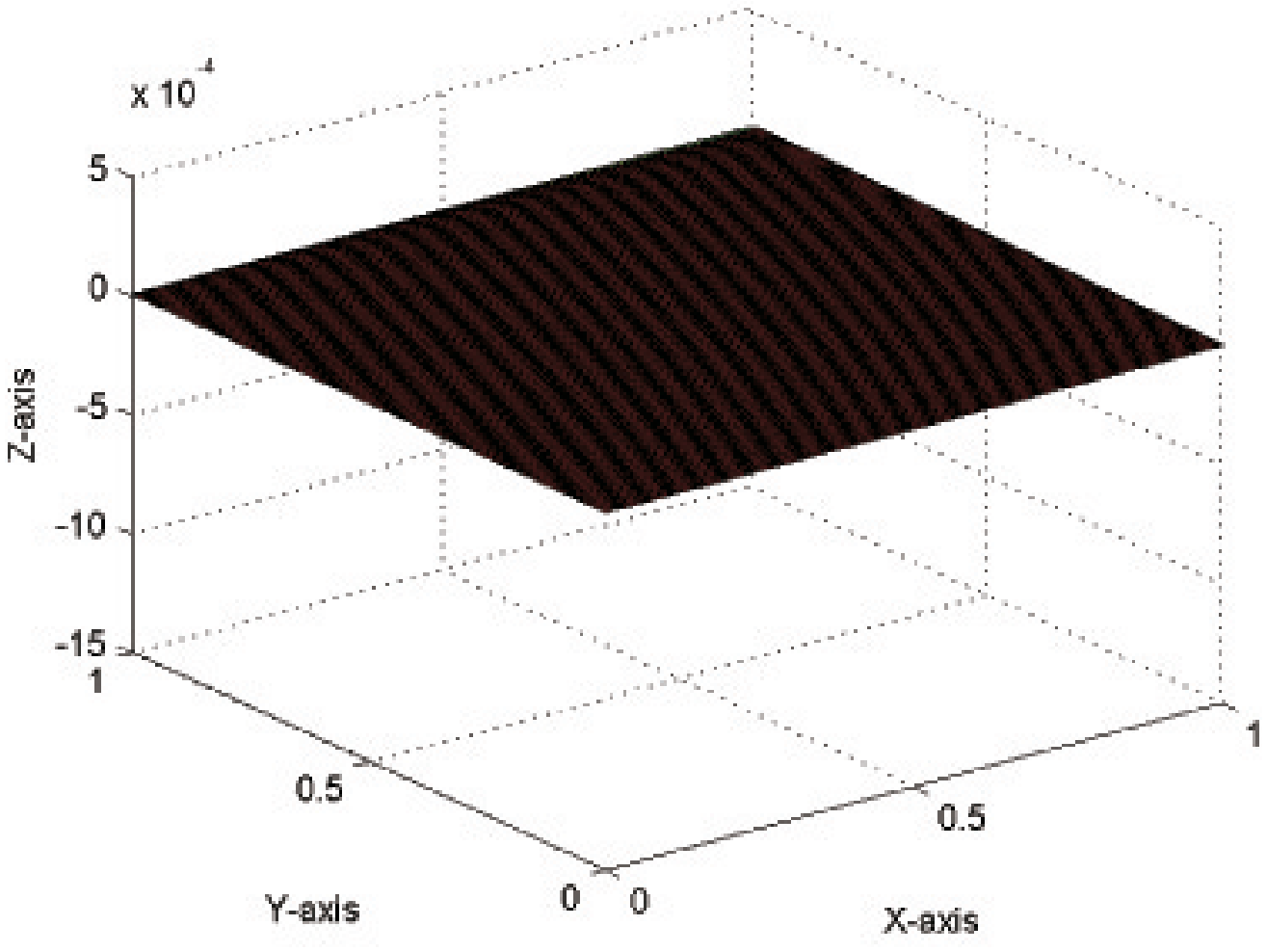}
\caption{Pointwise errors with $\delta$ in \eqref{eq: delta-K-usual-I} }
\label{fig:alex-1}
\end{minipage}%
\begin{minipage}[t]{0.5\linewidth}
\centering
\includegraphics[width=2.5in]{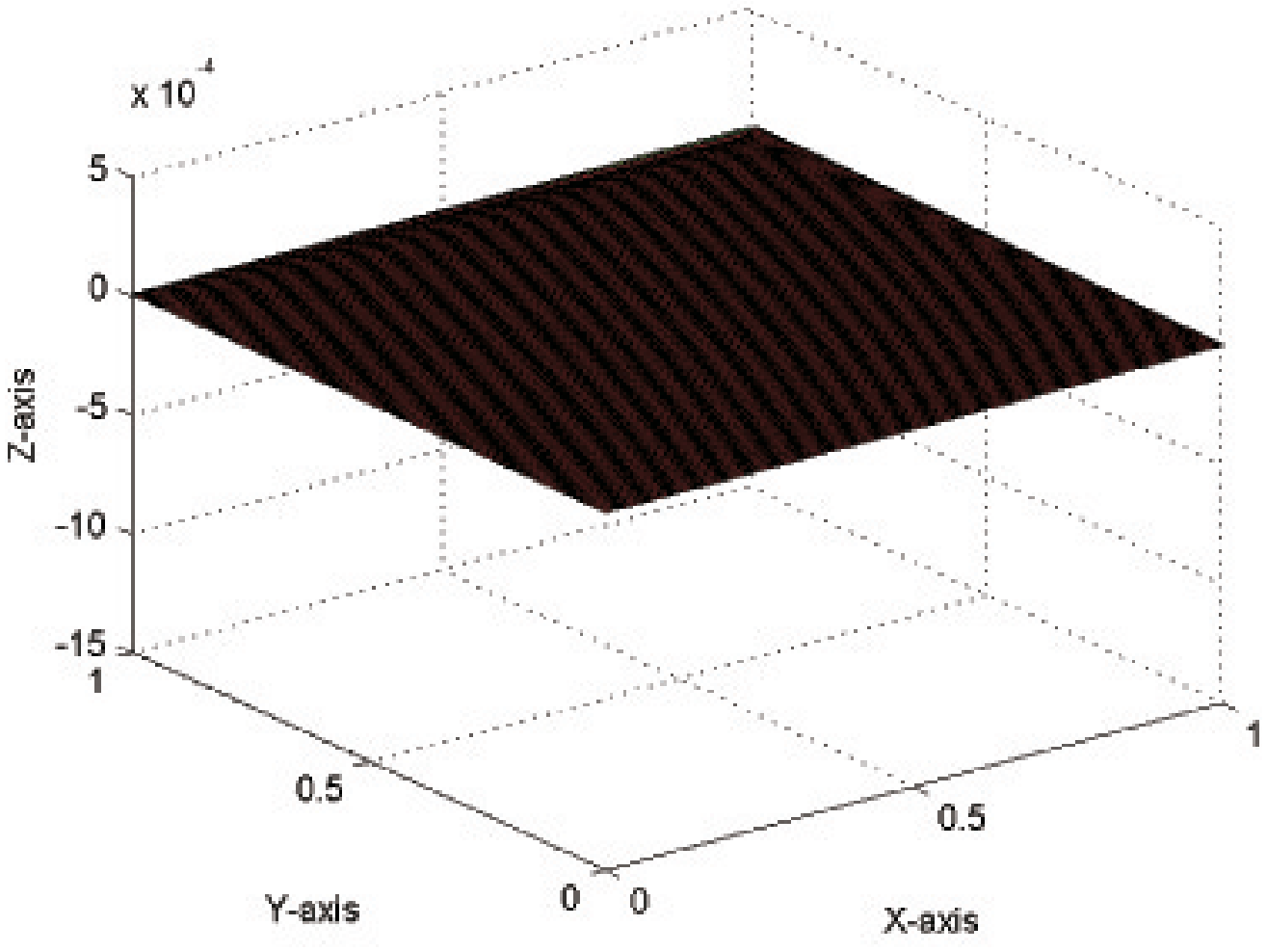}
\caption{Pointwise errors with $\delta$ in \eqref{eq: delta-K-SD-I} }
\label{fig:alex-2}
\end{minipage}
\end{figure}

\begin{figure}
\begin{minipage}[t]{0.5\linewidth}
\centering
\includegraphics[width=2.5in]{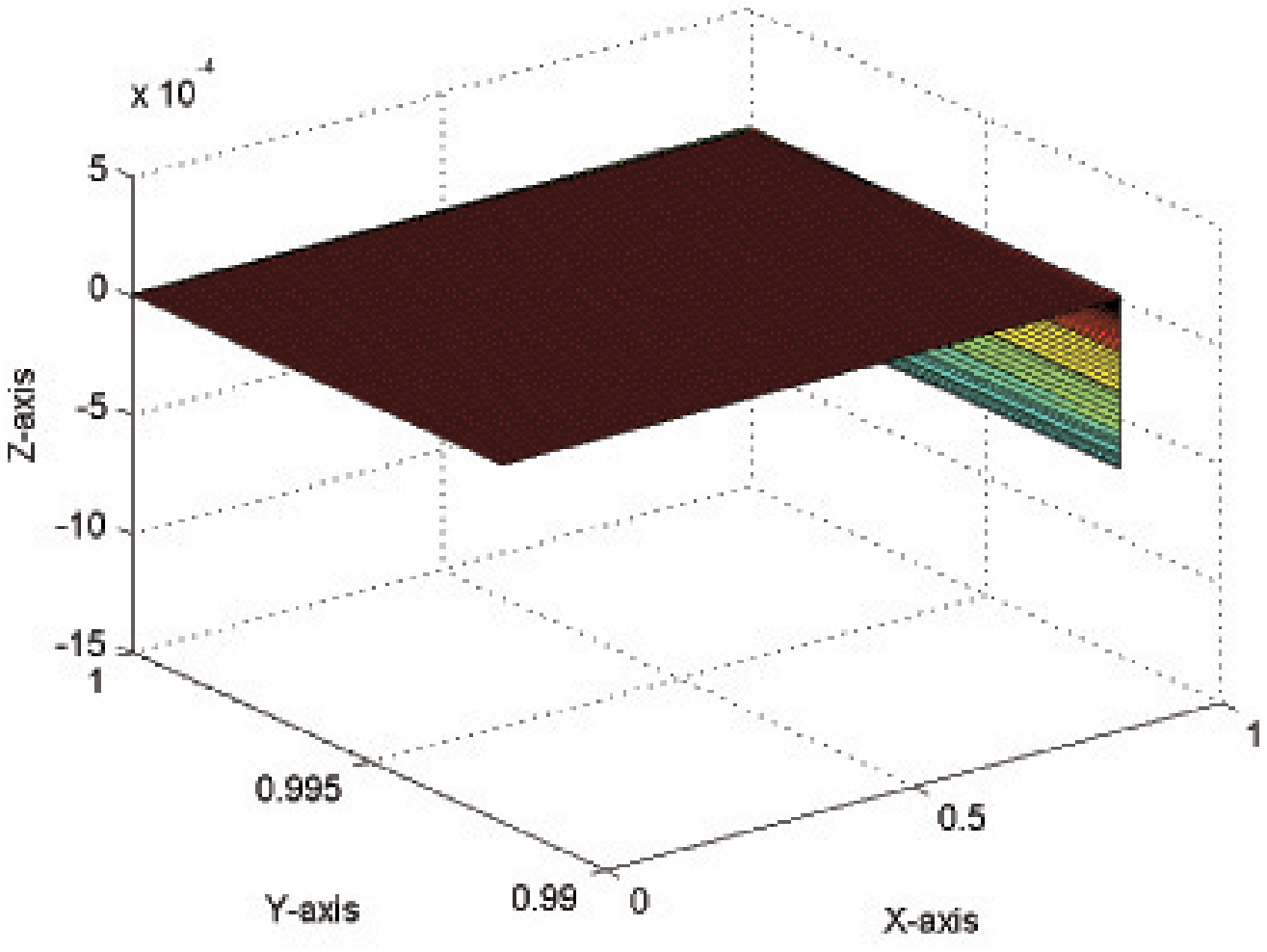}
\caption{Pointwise errors in exponential layers with $\delta$ in \eqref{eq: delta-K-usual-I} }
\label{fig:alex-3}
\end{minipage}%
\begin{minipage}[t]{0.5\linewidth}
\centering
\includegraphics[width=2.5in]{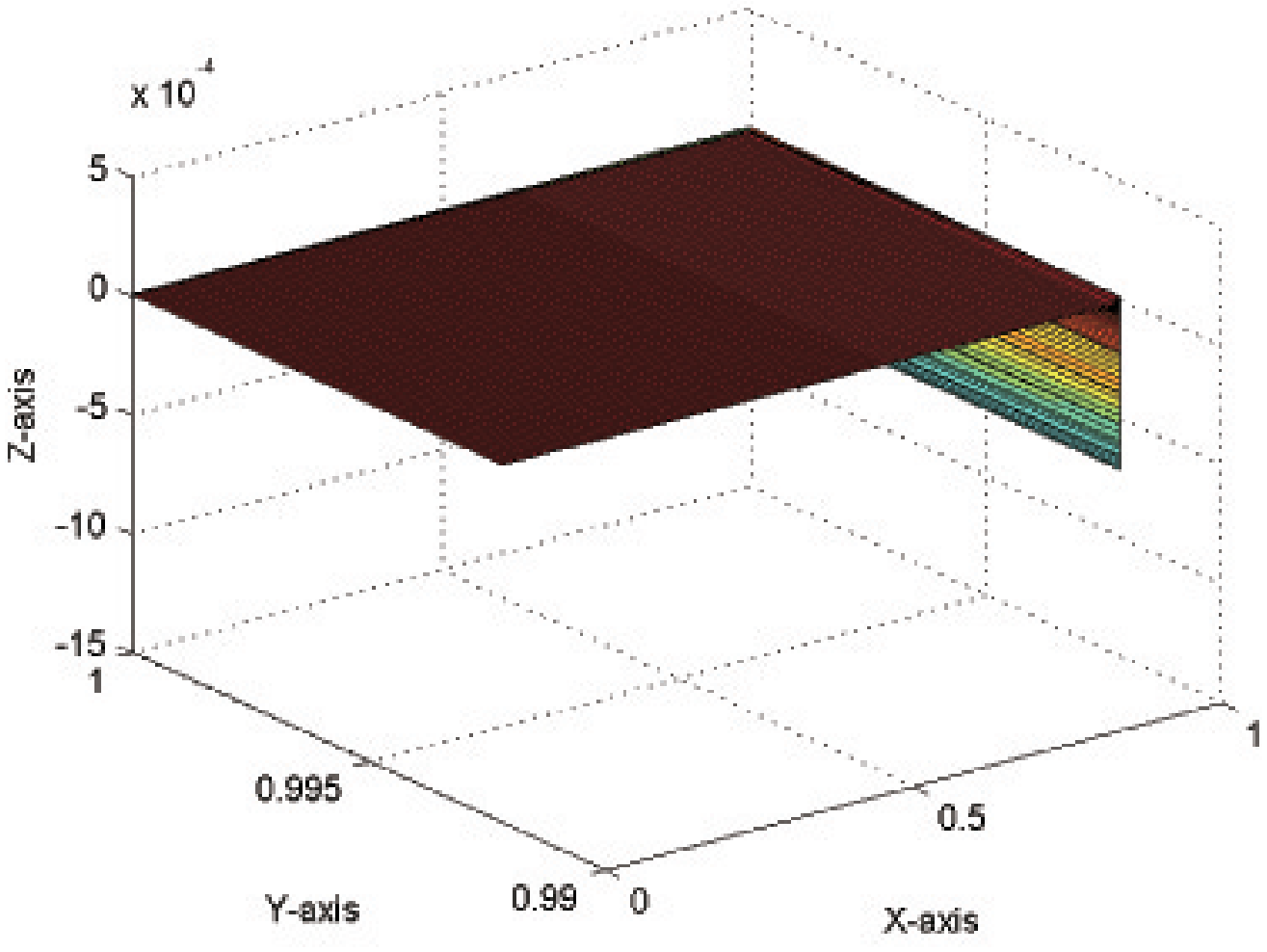}
\caption{Pointwise errors in exponential layers with $\delta$ in \eqref{eq: delta-K-SD-I}}
\label{fig:alex-4}
\end{minipage}
\end{figure}

%%%%%%%%%%%%%%%%%%%%%%%%%%%%%%%%%%%%%%%%%%%%%%%%%%%%%%%%%%%%%%%%%%%%%%%%%%%%%%%%%%%%%%%%%%%%%%%%%%%%%%%%%%%%
%
%
%%%%%%%%%%%%%%%%%%%%%%%%%%%%%%%%%%%%%%%%%%%%%%%%%%%%%%%%%%%%%%%%%%%%%%%%%%%%%%%%%%%%%%%%%%%%%%%%%%%%%%%%%%%%%%
% \bibliographystyle{elsarticle-num} 
%  \bibliography{FE-paper,FE-book,alex-paper}

%%%%%%%%%%%%%%%%%%%%%%%%%%%%%%%%%%%%%%%%%%%%%%%%%%%%%%%%%%%%%%%%%%%%%%%%%%%%%%%%%%%%%%%%%%%%%%

%%%%%%%%%%%%%%%%%%%%%%%%%%%%%%%%%%%%%%%%%%%%%%%%%%%%%%%%%%%%%%%%%%%%
%
%
%%%%%%%%%%%%%%%%%%%%%%%%%%%%%%%%%%%%%%%%%%%%%%%%%%%%%%%%%%%%%%%%%%%%

\end{document}